\titleformat{\subsection}[hang]
{\filcenter\bf}
{\thesubsection.}
{1pt}
{}
\declaretheoremstyle[bodyfont=\normalfont]{normalbody}
\declaretheorem[numberwithin=section,name=Theorem]{theorem}
\declaretheorem[sibling=theorem,name=Lemma]{lemma}
\declaretheorem[sibling=theorem,name=Proposition]{proposition}
\declaretheorem[sibling=theorem,style=normalbody,name=Remark]{remark}
\newcommand{\Z}{\mathbb{Z}}
\newcommand{\Q}{\mathbb{Q}}
\newcommand{\G}{\mathbb{G}}
\newcommand{\A}{\mathbb{A}}
\newcommand{\Hom}{\operatorname{Hom}}
\newcommand{\Gal}{\operatorname{Gal}}
\newcommand{\ab}{\operatorname{ab}}
\newcommand{\Res}{\operatorname{Res}}
\newcommand{\Inf}{\operatorname{Inf}}
\newcommand{\Cores}{\operatorname{Cores}}
\newcommand{\im}{\operatorname{im}}
\newcommand{\id}{\operatorname{id}}
\newcommand{\Pic}{\mathrm{Pic}}
\DeclareFontFamily{U}{wncy}{}
\DeclareFontShape{U}{wncy}{m}{n}{<->wncyr10}{}
\DeclareSymbolFont{mcy}{U}{wncy}{m}{n}
\DeclareMathSymbol{\Sh}{\mathord}{mcy}{"58}
\DeclareFontFamily{U}{wncy}{}
\DeclareFontShape{U}{wncy}{m}{n}{<->wncyr10}{}
\DeclareSymbolFont{mcy}{U}{wncy}{m}{n}
\DeclareMathSymbol{\Ch}{\mathord}{mcy}{"51}
\title{Hasse norm principle for Galois dihedral extensions}
\author{Felipe Rivera-Mesas}
\address{Departamento de Matemáticas, Facultad de Ciencias, Universidad de Chile}
\email{felipe.rivera.m@ug.uchile.cl}
\begin{document}

\maketitle

\begin{abstract}
	Let $L/k$ an Galois extension of number fields with Galois group isomorphic to a dihedral group of order $2n$. In this note, we give a general description of the Hasse norm principle for $L/k$ and the weak approximation for the norm one torus $R^1_{L/k}(\G_m)$ associated to $L/k$.
\end{abstract}

%
%
%
%

\section{Introduction}

Given a number field extension $L/k$, we say that the Hasse norm principle (HNP) holds for $L/k$ if every element of $L$ which is a local norm everywhere is a global norm. In other words, if $\A_L$ denotes the adèle ring and $N_{L/k}$ denotes the adelic norm induced by the norm $N_{L/k}:L^*\to k^*$, then the HNP holds for $L/k$ if the group
	\[ \frak{K}(L/k)=\frac{k^*\cap N_{L/k}(\A_L^*)}{N_{L/k}(L^*)} \]
is trivial. Furthermore, we can get a geometric interpretation of the Hasse norm principle for $L/k$ as follows: we have the following exact sequence of algebraic tori
	\[ 1 \to R^1_{L/k}(\G_m) \to R_{L/k}(\G_m) \overset{N_{L/k}}{\to} \G_{m,k} \to 1, \]
where $R_{L/k}(\G_m)$ denotes the Weil restriction of $\G_m$ from $L$ to $k$ and $N_{L/k}$ is the map induced by the norm from $L$ to $k$. The torus $R^1_{L/k}(\G_m)$ is called \emph{the norm one torus associated to} $L/k$. In this way, the group $\frak{K}(L/k)$ is isomorphic to the Tate-Shafarevic group $\Sh^1(k,R^1_{L/k}(\G_m))$ of $R^1_{L/k}(\G_m)$. Now, recalling that $\Sh^1(k,R^1_{L/k}(\G_m))$ is the obstruction to Hasse Principle for torsors under $R^1_{L/k}(\G_m)$, we have that the HNP holds for $L/k$ if and only if the Hasse Principle holds for torsors under $R^1_{L/k}(\G_m)$.

Given a $k$-torus $T$, we say that Weak Approximation (WA) holds for $T$ if its $k$-points are dense in the product of its local points. Equivalently, Weak Approximation holds for $T$ if the group
	\[ A(T) = \left(\prod_v T(k_v)\right)/\overline{T(k)} \]
is trivial.

Voskresenski\u\i\ in \cite{Vosk70} gives the following exact sequence, which makes a link between the Hasse principle for torsors under a torus $T$ and the WA for $T$:
	\[ 0 \to A(T) \to H^1(k,\Pic\overline{X})^\sim \to \Sh^1(k,T) \to 0, \]
where $\overline{X}$ is a smooth compatification of $T$ and $(-)^\sim=\Hom(-,\Q/\Z)$. In particular, when $T$ is the norm one torus $R^1_{L/k}(\G_m)$ associated to a number field extension $L/k$, the Voskresenski\u\i\ exact sequence links the HNP for $L/k$ and the WA for $T$. When the extension $L/k$ is Galois we have the following two facts:
	\begin{itemize}
		\item Colliot-Thélène has proved \cite[Proposition 7]{CT77} that the central object of the Voskresenski exact sequence is isomorphic to $H^3(\Gal(L/k),\Z)$;
		\item Tate has given \cite[p. 198]{Cassels2010} an explicit description of $\Sh^1(k,T)$ as follows:
			\[ \Sh^1(k,T)^\sim \cong \ker \left[ \Res : H^3(\Gal(L/k),\Z) \to \prod_{v\in\Omega_k} H^3(G_v,\Z) \right], \]
		where $\Omega_k$ denotes the set of all places of $k$ and $G_v$ denotes the descomposition group of $v$. Thus, we also have the following isomorphism:
			\[ A(T)^\sim \cong \im \left[ \Res : H^3(\Gal(L/k),\Z) \to \prod_{v\in\Omega_k} H^3(G_v,\Z) \right]. \]
	\end{itemize}

We will focus on this case, i.e. when $L/k$ is Galois. In this notes, we will give a general description of the HNP for $L/k$ and the WA for $T$ when $L/k$ has Galois group isomorphic to $D_n$, the dihedral group of order $2n$.

The following result gives us a general criterion for determining when a Galois extension of number fields with Galois group a dihedral group satisfies the HNP or when its norm one torus associated $R^1_{L/k}(\G_m)$ has WA.

\begin{theorem} \label{main}
	Let $L/k$ be a Galois extension of number fields with Galois group isomorphic to $D_n$, the dihedral group of order $2n$. Let $T:=R^1_{L/k}(\G_m)$ the norm one torus associated to $L/k$. Then, we have the following 
		\begin{enumerate}
			\item if $n$ is odd, then the HNP holds for $L/k$ and the WA holds for $T$;
			\item if $n$ is even, then
				\begin{enumerate}
					\item If there exists $v\in\Omega_k$ such that $G_v$ contains a Klein subgroup of $G$, then HNP holds for $L/k$ and WA fails for $T$;
					\item If for every $v\in\Omega_k$ the group $G_v$ does not contain a Klein subgroup, then HNP fails for $L/k$ and WA holds for $T$.
				\end{enumerate}
		\end{enumerate}
	In particular, the WA holds for $T$ if and only if for every $v\in\Omega_k$ the group $G_v$ does not contain a Klein subgroup of $G$.
\end{theorem}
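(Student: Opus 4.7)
The plan is to read off both the Hasse norm principle for $L/k$ and weak approximation for $T$ from the single invariant $H^3(G,\Z)$ together with its restriction maps $\Res_{G_v}:H^3(G,\Z)\to H^3(G_v,\Z)$, using the formulas of Colliot-Th\'el\`ene and Tate recalled in the introduction. The key preliminary computation is $H^3(D_n,\Z)$. Via the universal coefficient theorem and the classical value of the Schur multiplier $H_2(D_n,\Z)$, which is trivial for $n$ odd and isomorphic to $\Z/2\Z$ for $n$ even, one has $H^3(D_n,\Z)\cong\operatorname{Ext}(H_2(D_n,\Z),\Z)$, hence $H^3(D_n,\Z)=0$ for $n$ odd and $H^3(D_n,\Z)\cong\Z/2\Z$ for $n$ even. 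In the first case both $\Sh^1(k,T)$ and $A(T)$ vanish, proving~(1) immediately.

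For $n$ even the source of the restriction map has order~$2$, so the map is either zero or injective. These two possibilities produce $\Sh^1(k,T)\cong\Z/2\Z$ paired with $A(T)=0$ (HNP fails, WA holds) and $\Sh^1(k,T)=0$ paired with $A(T)\cong\Z/2\Z$ (HNP holds, WA fails); this already yields the dichotomy in~(2). To identify which alternative occurs I would classify the subgroups of $D_n$: every such subgroup is either cyclic, in which case $H^3$ vanishes because the integral cohomology of a cyclic group is zero in odd degrees, or dihedral $D_m$ for some $m\mid n$. Applying the Schur multiplier computation again, $H^3(D_m,\Z)=0$ when $m$ is odd, and when $m$ is even the subgroup $D_m$ contains a Klein four-group $V=\langle r^{n/2},sr^i\rangle$, through which any nonzero $\Res_H$ must factor. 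It therefore suffices to prove that for every Klein subgroup $V\leq D_n$ the restriction $\Res:H^3(D_n,\Z)\to H^3(V,\Z)$ is non-zero; since both groups have order two this forces an isomorphism, and the characterisation of the decomposition groups with non-vanishing contribution becomes ``$G_v$ contains a Klein subgroup of $G$''.

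The last claim is the main technical step, which I plan to prove by comparing the two Lyndon--Hochschild--Serre spectral sequences attached to the compatible diagram of short exact sequences
\[
\begin{array}{ccccccccc}
1 & \to & \langle r^{n/2}\rangle & \to & V & \to & C_2 & \to & 1 \\
 & & \downarrow & & \downarrow & & \Vert & & \\
1 & \to & C_n & \to & D_n & \to & C_2 & \to & 1.
\end{array}
\]
In each spectral sequence the contribution to $H^3$ is concentrated in $E_\infty^{1,2}$ because the odd $q$-rows vanish and $E_2^{3,0}=H^3(C_2,\Z)=0$. The induced map $E_2^{1,2}(D_n)\to E_2^{1,2}(V)$ is $H^1(C_2,C_n)\to H^1(C_2,\Z/2\Z)$ coming from reduction modulo~$2$, well-defined because inversion and the trivial action agree on $\Z/2\Z$; a direct Tate-cohomology calculation identifies it with the canonical isomorphism $C_n/2C_n\xrightarrow{\sim}\Z/2\Z$. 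Since both $H^3(D_n,\Z)$ and $H^3(V,\Z)$ are known independently to equal $\Z/2\Z$, no differential can affect the $(1,2)$-entry in either sequence, so the map at $E_\infty^{1,2}$ is an isomorphism and hence so is $\Res:H^3(D_n,\Z)\to H^3(V,\Z)$. Combining with Tate's description of $\Sh^1(k,T)$ and $A(T)$ yields the three regimes in the statement, and the final equivalence for weak approximation follows because $A(T)=0$ corresponds exactly to the case in which no $G_v$ contains a Klein subgroup. The main obstacle is precisely this spectral-sequence comparison, where one must track the equivariant actions carefully on the $E_2$ page.
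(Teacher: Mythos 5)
Your proposal is correct, and it follows the same overall architecture as the paper: the Voskresenski\u\i\ exact sequence together with the Colliot-Th\'el\`ene and Tate identifications reduces everything to computing $H^3(D_n,\Z)$ and the restriction maps to decomposition groups, with non-vanishing of restriction characterized by the presence of a Klein subgroup. The two key computations, however, go by genuinely different routes. For $H^3(D_n,\Z)$ you use the universal coefficient theorem and the Schur multiplier of $D_n$, whereas Proposition \ref{H3dihedral} runs an inflation--restriction argument on $H^2(-,\Q/\Z)$ via the extension $1\to\Z/n\Z\to D_n\to\Z/2\Z\to 1$; these are equivalent in content. The more substantive difference is in the restriction maps: the paper proves injectivity of $\Res_{G/H}$ by descending through a tower of index-two dihedral subgroups of order divisible by $4$ (Lemma \ref{index2}, whose proof computes $H^1(\Z/2\Z,H^{\ab})$ for $H=\langle r^2,s\rangle$) and finishes with a corestriction argument for the remaining odd index. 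You instead reduce in one step to a Klein subgroup $V=\langle r^{n/2},sr^i\rangle$ and compare the Lyndon--Hochschild--Serre spectral sequences of $1\to\langle r^{n/2}\rangle\to V\to C_2\to1$ and $1\to C_n\to D_n\to C_2\to1$ directly. Your checks are sound: every Klein subgroup of $D_n$ does have this form, $H^3$ is concentrated in $E^{1,2}_\infty$ since the odd rows and $E_2^{3,0}$ vanish, the coefficient map $H^2(C_n,\Z)\to H^2(\langle r^{n/2}\rangle,\Z)$ is reduction mod $2$ and induces an isomorphism on $H^1(C_2,-)$, and the independently known value $H^3\cong\Z/2\Z$ forces the $d_3$ differential into $E^{4,0}$ to vanish. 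Your route buys a single uniform comparison in place of the paper's induction on index-two subgroups, at the cost of having to invoke the a priori computation of $H^3$ for both groups to control the differentials; the paper's route stays entirely inside elementary Tate cohomology of $\Z/2\Z$-modules.
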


\section{HNP for $L/k$ and WA for $T:=R^1_{L/k}(\G_m)$}

The goal of this section is to give an explicit description of the group $H^3(G,\Z)$ when $G$ is a dihedral group and a general description of the restriction maps
	\[ H^3(G,\Z) \to H^3(H,\Z), \]
for every $H\leq G$. Let $G:=D_n$ the dihedral group of orden $2n$, i.e.
	\[ G = \langle r,s \mid r^n=s^2=e,\ srs=r^{-1} \rangle. \]

\subsection{Description of the group $H^3(G,\Z)$}

The following proposition gives an explicit description of $H^3(G,\Z)$.

\begin{proposition} \label{H3dihedral}
	Let $G$ be a dihedral group of order 2n. The group $H^3(G,\Z)$ is isomorphic to $(\Z/n\Z)[2]$.
\end{proposition}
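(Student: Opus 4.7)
The plan is to compute $H^3(G,\Z)$ via the Lyndon--Hochschild--Serre spectral sequence of the split short exact sequence
\[ 1 \to C_n \to G \to C_2 \to 1, \]
where $C_n = \langle r \rangle$ is the rotation subgroup and the quotient $C_2$ is generated by the class of $s$. This gives
\[ E_2^{p,q} = H^p(C_2, H^q(C_n, \Z)) \Longrightarrow H^{p+q}(G, \Z). \]
I would first recall the standard computation $H^0(C_n, \Z) = \Z$, $H^{2k-1}(C_n, \Z) = 0$, $H^{2k}(C_n, \Z) = \Z/n\Z$ for $k \geq 1$. Because $s r s^{-1} = r^{-1}$, the induced $C_2$-action on $H^2(C_n, \Z) \cong \Hom(C_n, \Q/\Z)$ is multiplication by $-1$, and by the cup-product structure the action on $H^{2k}(C_n, \Z)$ is $(-1)^k$; in particular it is negation on $H^2(C_n,\Z)$.

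Next, I would read off the diagonal $p + q = 3$. Three of the four terms vanish for degree reasons: $E_2^{3,0} = H^3(C_2, \Z) = 0$ and $E_2^{2,1} = E_2^{0,3} = 0$ because odd-degree integral cohomology of a cyclic group is zero. The only possibly nonzero term is
\[ E_2^{1,2} = H^1(C_2, \Z/n\Z) \]
with $C_2$ acting by negation. A direct Tate-cohomology computation (the norm is $N = 1 + (-1) = 0$ and $1 - (-1) = 2$) yields
\[ H^1(C_2, \Z/n\Z) \cong (\Z/n\Z)/2(\Z/n\Z) \cong (\Z/n\Z)[2], \]
which is trivial when $n$ is odd and $\Z/2\Z$ (generated by the class of $n/2$) when $n$ is even.

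To finish, I would check that no differential perturbs this term. The outgoing $d_2 : E_2^{1,2} \to E_2^{3,1} = 0$ is zero trivially, and the only remaining candidate is $d_3 : E_3^{1,2} \to E_3^{4,0}$. Here I would exploit that the extension splits via the reflection subgroup $\langle s \rangle$: the composition
\[ H^p(C_2, \Z) \xrightarrow{\Inf} H^p(G, \Z) \xrightarrow{\Res} H^p(\langle s \rangle, \Z) \]
is the identity, so inflation is split injective and $E_\infty^{p,0} = E_2^{p,0}$ for every $p$. Hence no differential can hit $E_r^{4,0}$ for $r \geq 2$, forcing $d_3 = 0$. Since $E_2^{1,2}$ is the only nonzero term in total degree $3$, we conclude $H^3(G, \Z) \cong (\Z/n\Z)[2]$.

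The main obstacle I anticipate is bookkeeping: correctly identifying the sign action on the $H^{2k}(C_n, \Z)$ (handled by the Bockstein identification with $\Hom(C_n, \Q/\Z)$) and ruling out the $d_3$ differential, both of which are resolved by the splitting of the extension through a reflection subgroup. This spectral-sequence setup has the further advantage that it will feed naturally into the computation of the restriction maps $H^3(G, \Z) \to H^3(H, \Z)$ promised in the next subsection.
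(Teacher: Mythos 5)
Your proof is correct and follows essentially the same route as the paper: both run the Lyndon--Hochschild--Serre spectral sequence of the split extension $1\to C_n\to G\to C_2\to 1$, use the splitting through $\langle s\rangle$ to make inflation injective and thereby kill the one dangerous differential, and reduce everything to $H^1(C_2,\Z/n\Z)$ with the inversion action. The only difference is cosmetic: you work with $\Z$ coefficients directly in degree $3$, while the paper computes $H^2(G,\Q/\Z)$ via the low-degree exact sequence and then applies the dimension shift $H^3(G,\Z)\cong H^2(G,\Q/\Z)$.
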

\begin{proof}
	From the Hochschild-Serre spectral sequence associated to the split exact sequence
		\[ 1 \to \Z/n\Z \to G \to \Z/2\Z \to 1 \]
	and the constant $G$-module $\Q/\Z$, we have the following exact sequence:
		\[ H^2(\Z/2\Z,\Q/\Z) \to \ker(\Res) \to H^1(\Z/2\Z,H^1(\Z/n\Z,\Q/\Z)) \overset{\lambda}{\to} H^3(\Z/2\Z,\Q/\Z), \]
	where $\Res:=\Res: H^2(G,\Q/\Z) \to H^2(\Z/n\Z,\Q/\Z)$. Since $\Z/2\Z$ is cyclic and $\Q$ is uniquely divisible, we have
		\[ H^2(\Z/2\Z,\Q/\Z) \cong H^1(\Z/2\Z,\Z) = 0, \]
	and similarly we have $H^2(\Z/n\Z,\Q/\Z)=0$. Thus, the previous exact sequence becomes
		\[ 0 \to H^2(G,\Q/\Z) \to H^1(\Z/2\Z,H^1(\Z/n\Z,\Q/\Z)) \overset{\lambda}{\to} H^3(\Z/2\Z,\Q/\Z). \]
	Now, the exact sequence $1\to\Z/n\Z\to G\to\Z/2\Z\to 1$ is split. Then, since $\Q/\Z$ is constant, we have that the map
		\[ \Inf : H^3(\Z/2\Z,\Q/\Z) \to H^3(G,\Q/\Z), \]
	is injective. On the other hand, from the Hochschild-Serre spectral sequence, we have the complex
		\[ H^1(\Z/2\Z,H^1(\Z/n\Z,\Z)) \overset{\lambda}{\to} H^3(\Z/2\Z,\Q/\Z) \overset{\Inf}{\to} H^3(G,\Q/\Z). \]
	Hence, $\lambda$ is trivial and then $H^2(G,\Q/\Z)\cong H^1(\Z/2\Z,H^1(\Z/n\Z,\Q/\Z))$. Now,
		\[ H^1(\Z/2\Z,H^1(\Z/n\Z,\Q/\Z))\cong H^1(\Z/2\Z,\Z/n\Z), \] 
	with $\Z/2\Z$ acting on $\Z/n\Z$ by taking the inverse. Thus we have that
		\[ H^3(G,\Z) \cong H^2(G,\Q/\Z) \cong H^1(\Z/2\Z,\Z/n\Z) \cong (\Z/n\Z)[2], \]
	where the last isomorphism is obtained from a direct computation on cocycles and coboundaries.
\end{proof}

\subsection{Study of restriction maps $H^3(G,\Z)\to H^3(H,\Z)$.}

Now, in order to find a general description for $\Sh^1(k,T)$, we will study all restriction maps 
	\[ \Res_{G/H} : H^3(G,\Z) \to H^3(H,\Z), \]
with $H\leq G$.

For convenience, we will introduce the following notation: for a subgroup $H\leq G$ and a $G$-module $M$, we denote by $\Res_{G/H}^i(M)$ the restriction map
	\[ \Res : H^i(G,M) \to H^i(H,M). \]

The following lemma helps us for this purpose.

\begin{lemma} \label{index2}
	Suppose that $n$ is even. If $H\leq G$ is a dihedral subgroup of index 2 and order divisible by 4, then $\ker(\Res_{G/H}^3(\Z))$ is trivial.
\end{lemma}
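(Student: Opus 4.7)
The plan is to identify the restriction $\Res_{G/H}^3(\Z)$ with a map of $H^1(\Z/2\Z,-)$ groups via the naturality of the Hochschild-Serre spectral sequence used in the proof of Proposition~\ref{H3dihedral}, and then check injectivity by a direct computation.

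Since $H\leq G=D_n$ has index $2$ and is dihedral with $|H|$ divisible by $4$, we have $H\cong D_{n/2}$ with $n/2$ even, so by Proposition~\ref{H3dihedral} both $H^3(G,\Z)$ and $H^3(H,\Z)$ are cyclic of order $2$. Taking $H=\langle r^2,s\rangle$ (the remaining dihedral subgroup of index $2$, $\langle r^2,rs\rangle$, is handled identically), we have $H\cap\langle r\rangle=\langle r^2\rangle$, and the inclusion $H\hookrightarrow G$ gives a morphism from the split extension $1\to\langle r^2\rangle\to H\to\Z/2\Z\to 1$ into $1\to\langle r\rangle\to G\to\Z/2\Z\to 1$, inducing the identity on the $\Z/2\Z$ quotients (both are generated by the image of $s$).

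By functoriality of the Hochschild-Serre spectral sequence and of the connecting isomorphism $H^3(-,\Z)\cong H^2(-,\Q/\Z)$, together with the vanishing of the $E_2$-terms $E_2^{2,0}$ and $E_2^{0,2}$ established in the proof of Proposition~\ref{H3dihedral} (so that $H^2(-,\Q/\Z)\cong E_\infty^{1,1}=E_2^{1,1}$ for both $G$ and $H$), the restriction $\Res_{G/H}^3(\Z)$ is identified with the map
\[ H^1(\Z/2\Z,\Z/n\Z)\to H^1(\Z/2\Z,\Z/(n/2)\Z) \]
induced by the natural reduction $\Z/n\Z\twoheadrightarrow\Z/(n/2)\Z$, where $\Z/2\Z$ acts by inversion on both sides (via conjugation by $s$ on $\langle r\rangle$ and $\langle r^2\rangle$).

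Since the norm $1+\sigma$ vanishes on $\Z/m\Z$ under inversion, a direct cocycle computation gives $H^1(\Z/2\Z,\Z/m\Z)\cong(\Z/m\Z)/2(\Z/m\Z)$, which is cyclic of order $2$ for $m\in\{n,n/2\}$ (here using $4\mid n$); the induced map sends the class of $1$ to the class of $1$ and is therefore an isomorphism, giving $\ker(\Res_{G/H}^3(\Z))=0$. The main obstacle is the bookkeeping in the previous paragraph: one must verify that the edge isomorphisms $H^2(-,\Q/\Z)\cong E_2^{1,1}$ for $G$ and $H$ are compatible with pullback along the above morphism of extensions, but this is a formal consequence of the naturality of Hochschild-Serre once the vanishing of the other $E_2$-terms along the $p+q=2$ antidiagonal is in hand.
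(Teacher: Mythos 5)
Your proof is correct, but it takes a genuinely different route from the paper's. The paper runs inflation-restriction for the extension $1\to H\to G\to\Z/2\Z\to 1$ in which $H$ itself is the normal subgroup: the five-term sequence with $\Q/\Z$-coefficients embeds $\ker(\Res^2_{G/H}(\Q/\Z))$ into $H^1(\Z/2\Z,H^1(H,\Q/\Z))\cong H^1(\Z/2\Z,H^{\ab})$, where $\Z/2\Z=\langle\overline{r}\rangle$ acts by conjugation, and a short hands-on computation in $H^{\ab}\cong(\Z/2\Z)^2$ (the norm and $(1-\overline{r})$ of the generators $s$ and $sr^2$ all equal $r^2$) shows this $H^1$ vanishes. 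You instead keep the cyclic-by-$\Z/2\Z$ decompositions already used in Proposition \ref{H3dihedral}, for both $G$ and $H$, and invoke naturality of Hochschild-Serre along the morphism of extensions to identify $\Res^3_{G/H}(\Z)$ with the reduction $(\Z/n\Z)/2(\Z/n\Z)\to(\Z/(n/2)\Z)/2(\Z/(n/2)\Z)$, an isomorphism of groups of order $2$. Your version costs the functoriality bookkeeping you flag --- and note that $E_\infty^{1,1}=E_2^{1,1}$ requires not only the vanishing of $E_2^{2,0}$ and $E_2^{0,2}$ but also the vanishing of $d_2:E_2^{1,1}\to E_2^{3,0}$, which is exactly the $\lambda=0$ step in the Proposition's proof that your citation implicitly covers --- but it buys an explicit description of the restriction as an isomorphism rather than a bare injectivity statement, and it would let one handle the whole $2$-power chain of dihedral subgroups in Proposition \ref{res} in a single step instead of iterating the Lemma. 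The paper's argument is shorter and avoids spectral-sequence naturality entirely; both are sound.
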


\begin{remark} \label{klein}
	Note that the condition about the divisibility by 4 of the order of $H$ implies that $H$ contains a Klein subgroup of $G$.
\end{remark}

\begin{proof}
	Without loss of generality, we may suppose that $H=\langle r^2,s\rangle$. We have the following exact sequence
		\[ 0 \to H \to G \to \Z/2\Z \to 0, \]
	where $\Z/2\Z$ is generated by $\overline{r}$. Now, by the Hochschild-Serre spectral sequence, we have the inclusion
		\[ \ker(\Res^2_{G/H}(\Q/\Z)) \hookrightarrow H^1(\Z/2\Z,H^1(H,\Q/\Z)) \cong H^1(\Z/2\Z,H^{\ab}), \]
	where $\Z/2\Z$ acts on $H^{\ab}$ via ${^{\overline{r}}}(h[H,H])=(rhr^{-1})[H,H]$. Since $H$ contains a Klein subgroup of $G$ (cf. Remark \ref{klein}), we have that
		\[ H^{\ab} = \langle s[H,H],sr^2[H,H] \rangle \cong \Z/2\Z\times\Z/2\Z. \]
	Now, since $\Z/2\Z$ is cyclic, we have that
		\[ H^1(\Z/2\Z,H^{\ab}) \cong H^{-1}(\Z/2\Z,H^{\ab}) = \ker(N)/\langle 1-\overline{r}\rangle H^{\ab}, \]
	where $N:H^{\ab}\to H^{\ab}$ is the norm map. We note that
		\[ N(s[H,H])=N(sr^2[H,H])=r^2[H,H]\neq [H,H] \]
	and 
		\[ (1-\overline{r})s[H,H] = (1-\overline{r})(sr^2[H,H]) = r^2[H,H]. \]
	Hence, $H^1(\Z/2\Z,H^{\ab})$ is trivial. Therefore, since $H^i(G,\Z)\cong H^{i-1}(G,\Q/\Z)$ for $i>1$, $\ker(\Res^3_{G/H}(\Z))\cong\ker(\Res_{G/H}^2(\Q/\Z))$ is trivial.
		
\end{proof}

The following result gives a general description of the restriction maps $\Res_{G/H}^3(\Z)$. 

\begin{proposition} \label{res}
	Let $G\cong D_n$ and $H\leq G$. Then
		\[ \Res : H^3(G,\Z) \to H^3(H,\Z) = \begin{cases} \id & \text{if }H\text{ contains a Klein subgroup of }G; \\
																0 & \text{if }H\text{ does not contain a Klein subgroup of }G.
													\end{cases} \]
\end{proposition}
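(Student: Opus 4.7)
The plan is to split into two cases according to whether $H$ contains a Klein subgroup of $G$. Since every subgroup of $G\cong D_n$ is either cyclic or dihedral of the form $D_m$ with $m\mid n$, and since every Klein $4$-subgroup of $G$ necessarily contains $r^{n/2}$ together with two reflections, a direct check shows that $H$ contains a Klein subgroup of $G$ if and only if $H\cong D_m$ with $m$ even.

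In the case where $H$ contains no Klein subgroup of $G$, $H$ is either cyclic, in which case $H^3(H,\Z)=0$ by the standard computation of cohomology of finite cyclic groups with trivial coefficients, or $H\cong D_m$ with $m$ odd, in which case Proposition \ref{H3dihedral} gives $H^3(H,\Z)\cong(\Z/m\Z)[2]=0$. In both situations the target of $\Res^3_{G/H}(\Z)$ vanishes and the map is trivially zero.

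For the main case, $H\cong D_m$ with $m$ even and $n$ even, so by Proposition \ref{H3dihedral} both $H^3(G,\Z)$ and $H^3(H,\Z)$ are isomorphic to $\Z/2\Z$; it then suffices to show $\Res^3_{G/H}(\Z)$ is nonzero. I would argue by induction on the $2$-adic valuation $v_2([G:H])=v_2(n/m)$. When $v_2(n/m)=0$, so $[G:H]$ is odd, the classical identity $\Cores\circ\Res=[G:H]\cdot\id$ on $H^3(G,\Z)\cong\Z/2\Z$ reduces to the identity, so $\Res^3_{G/H}(\Z)$ is injective. When $v_2(n/m)\geq 1$, I would pick generators so that $H=\langle r^{n/m},sr^{j_0}\rangle$ and introduce the intermediate group $H':=\langle r^{n/(2m)},sr^{j_0}\rangle\cong D_{2m}$. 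Then $H$ sits inside $H'$ as a dihedral subgroup of index $2$ with $|H|=2m$ divisible by $4$, so Lemma \ref{index2} (applied with $H'$ in the role of the ambient dihedral group) gives that $\Res^3_{H'/H}(\Z)$ is injective. Moreover $H'$ still contains a Klein subgroup of $G$ (as $2m$ is even) and $v_2([G:H'])=v_2(n/m)-1$, so the inductive hypothesis applied to the pair $(G,H')$ yields that $\Res^3_{G/H'}(\Z)$ is injective. Composing $\Res^3_{G/H}(\Z)=\Res^3_{H'/H}(\Z)\circ\Res^3_{G/H'}(\Z)$ gives the required injectivity, and hence the map is an isomorphism of copies of $\Z/2\Z$.

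The main obstacle is engineering the intermediate subgroup $H'$ so that Lemma \ref{index2} applies cleanly at each inductive step; once $H'$ is in place, the verification of its hypotheses is routine from the explicit subgroup structure of $D_n$. The induction is essentially a device to strip off the odd part of the index via $\Cores\circ\Res$ and then reduce the $2$-part one step at a time through the lemma.
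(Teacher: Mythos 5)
Your proposal is correct and follows essentially the same route as the paper: both arguments reduce to the case $H\cong D_m$ with $m$ even, strip off the odd part of the index using $\Cores\circ\Res=[\,\cdot\,]\cdot\id$, and handle the $2$-part by iterating Lemma \ref{index2} over a chain of index-$2$ dihedral inclusions. Your induction on $v_2(n/m)$ is just a repackaging of the paper's explicit chain $G=H'_0\supseteq H'_1\supseteq\cdots\supseteq H'_r=H'\supseteq H$.
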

\begin{proof}
	Let us recall that all subgroups of $G$ are either cyclic or a dihedral group $D_k$ with $k$ dividing $n$. Now, If $H$ is cyclic or a dihedral group $D_k$ with $k$ odd, then $H^3(H,\Z)=0$ by Proposition \ref{H3dihedral}. Thus, if $H$ has order non divisible by 4, then $\Res^3_{G/H}(\Z)$ is trivial. In particular, when $n$ is odd, all restriction maps $\Res_{G/H}^3$ are trivial. Then, we only have to prove the proposition when $n$ is even and $H$ is a dihedral group $D_k$ with $k$ even. If $n=2^{v_2(n)}\ell$, let $H'\cong D_m$ with $m=2^{v_2(k)}\ell$ and $H\leq H'$. Now, we can get a sequence of groups
		\[ H'_0=G \supseteq H_1' \supseteq ... \supseteq H'_r=H', \]
	where $r=v_2(n)-v_2(k)$, $H'_i\cong D_{k_i}$ and $[H'_{i-1}:H_i']=2$ for all $i>0$. Then, by Lemma \ref{index2}, we have that
		\[ \Res : H^3(H'_{i-1},\Z) \to H^3(H'_i,\Z)) \]
	is injective for all $i>0$.
	On the other hand, by Proposition \ref{H3dihedral}, we have that $\Res:H^3(H',\Z)\to H^3(H,\Z)$ is injective since $[H':H]$ is odd and $\Cores\circ\Res=[H':H]$. Hence,
		\[ \Res_{G/H} = \Res_{G/H'_1}\circ ... \circ \Res_{H'_{r-1}/H'} \circ \Res_{H'/H} \]
	is injective.
\end{proof}

The results above allows us to prove the Theorem \ref{main}:

\begin{proof}[Proof of the Theorem \ref{main}]
	When $n$ is odd the central object of the Voskresenski\u\i\ exact sequence is trivial and therefore $\Sh^1(k,T)=A(T)=0$. On the other hand, when $n$ is even, the group $H^3(G,\Z)$ is isomorphic to $\Z/2\Z$, and then the Voskresenski\u\i\ exact sequence becomes
		\[ 0 \to A(T) \to \Z/2\Z \to \Sh^1(k,T) \to 0, \]
	Besides, we note that $\Sh^1(k,T)$ is trivial iff there exists $v\in\Omega_k$ such that the restriction map
		\[ \Res_v : H^3(G,\Z) \to H^3(G_v,\Z) \] 
	is not trivial and, by Proposition \ref{res}, this occurs iff there exists $v\in\Omega_k$ such that $G_v$ contains a Klein group. Whence we conclude the proof of the theorem.

\end{proof}

\bibliographystyle{alpha}
\bibliography{dihedral}

\begin{thebibliography}{CTS77}

\bibitem[CF10]{Cassels2010}
J.W.S. Cassels and A.~Fr{\"o}hlich.
\newblock {\em Algebraic Number Theory: Proceedings of an Instructional
  Conference Organized by the London Mathematical Society (a NATO Advanced
  Study Institute) with the Support of the International Mathematical Union}.
\newblock London Mathematical Society, 2010.

\bibitem[CTS77]{CT77}
Jean-Louis Colliot-Th\'{e}l\`ene and Jean-Jacques Sansuc.
\newblock La {$R$}-\'{e}quivalence sur les tores.
\newblock {\em Ann. Sci. \'{E}cole Norm. Sup. (4)}, 10(2):175--229, 1977.

\bibitem[Vos70]{Vosk70}
V~E Voskresenski{\u{\i}}.
\newblock Birational properties of linear algebraic groups.
\newblock {\em Mathematics of the {USSR}-Izvestiya}, 4(1):1--17, feb 1970.

\end{thebibliography}

\end{document}